\theoremstyle{plain}
\newtheorem{Thm}{Theorem}
\renewcommand{\theTHM}{}
\theoremstyle{remark}
\newcommand{\N}{\mathbb{N}}
\newcommand{\C}{\mathbb{C}}
\def\({\left(}
\def\){\right)}
\def\[{\left[}
\def\]{\right]}
\def\<{\left<}
\def\>{\right>}
\def\v{{\bf{v}}}
\def\w{{\bf{w}}}
\def\x{{\bf{x}}}
\def\y{{\bf{y}}}
\def\z{{\bf{z}}}
\def\a{{\bf{a}}}
\def\b{{\bf{b}}}
\def\c{{\bf{c}}}
\def\1{{\bf{1}}}
\newcommand{\eq}[1]{\eqref{E#1}}
\newcommand{\Eq}[2]{\ifthenelse{\equal{#1}{*}}
  {\begin{equation*}\begin{aligned}#2\end{aligned}\end{equation*}}
  {\begin{equation}\begin{aligned}\label{E#1}#2\end{aligned}\end{equation}}}
\begin{document}

\title{Products and inverses of  multidiagonal matrices with equally spaced diagonals}

\author{L\'aszl\'o Losonczi}
\address{Faculty of Economics, University of Debrecen, Hungary}
\email{laszlo.losonczi@econ.unideb.hu,losonczi08@gmail.com}

\subjclass{15A09, 15A15, 15B99}
\keywords{multidiagonal matrices, structure of their products, inverse matrix}
\date{\today}

\begin{abstract}
Let $n,k$ be fixed natural numbers with $1\le k\le n$ and let $A_{n+1,k,2k,\dots,sk}$ denote an $(n+1)\times (n+1)$ complex multidiagonal matrix having $s=[n/k]$ sub- and superdiagonals at distances $k,2k,\dots,sk$ from the main diagonal. We prove that the set $\mathcal{MD}_{n,k}$ of all such multidiagonal matrices is closed under multiplication and powers with positive exponents. Moreover the subset of $\mathcal{MD}_{n,k}$ consisting of all nonsingular matrices is closed under taking inverses and powers with negative exponents. In particular we obtain that the inverse of a nonsingular matrix
$A_{n+1,k}$ (called $k$-tridigonal) is in $\mathcal{MD}_{n,k}$, moreover if $n+1\le 2k$
then $A^{-1}_{n+1,k}$ is also $k$-tridigonal. Using this fact we give an explicite formula for this inverse.
\end{abstract}

\maketitle

\section{Introduction}

Multidiagonal matrices have a wide range of applications in various field of mathematics and engineering. Among them matrices with equally spaced diagonals have much nicer properties than those with arbitrarily spaced diagonals (see \cite{dFL2,BF2019,dFL} and their references). Here we study how  multidiagonal matrices with equally spaced diagonals behave under multiplication, taking inverse and powers.

Let $n,k$ be fixed natural numbers with $1\le k\le n$ and let $\mathcal{M}_{n}$
denote the set of  $n\times n$ complex matrices. A matrix $A=(a_{ij})_{i,j=0}^n,\in
\mathcal{M}_{n+1}$ is called $(k,2k,\dots,sk)$- multidiagonal if $a_{ij}=0$ if
$|i-j|\ne lk$, for $l=0,1,\dots,s$ where $sk\le n$. Such matrices will be denoted by
$A_{n+1,k,2k,\dots,sk}$ (supressing for the moments their dependence from the diagonals).
Such matrices are called $k$-tridiagonal if $s=1$ and $(k,2k)$-pentadiagonal if $s=2.$ Clearly the maximal number $s$ of sub- and superdiagonals in $A_{n+1,k,2k,\dots,sk}$ is $[n/k].$

Let $\mathcal{MD}_{n,k}$ be the set of all $A_{n+1,k,2k,\dots,sk}$ matrices with $s=[n/k].$ We prove that the set $\mathcal{MD}_{n,k}$ is closed under multiplication, and taking positive and (for nonsingular matrices also) negative powers. Since   matrices $A_{n+1,k,2k,\dots,s'k}$ with $1\le s'\le[n/k]$ also belong to $\mathcal{MD}_{n,k}$ (by taking the diagonals  $(s'+1)k,(s'+2)k,\dots,sk $ to be zero) we obtain that the inverse of a $k$-tridiagonal  matrix belongs to $\mathcal{MD}_{n,k}$. Moreover, if $n+1\le 2k$ then the inverse of a $k$-tridiagonal  matrix is also $k$-tridiagonal. Using this we find the explicit inverse of such $k$-tridiagonal  matrices.

The articles \cite{DL, MV} are related to the structure the product of tridiagonal matrices. Their investigations are based on the result that the product of two different $1$-tridiagonal Toeplitz matrices is  a $(1,2)$-pentadiagonal imperfect Toeplitz matrix (where the first and last elements of the main diagonal  are different to the other ones). In \cite{MA} the authors use Toeplitz $(1,2)$-pentadiagonal  matrices to study orthogonal polynomials on the unit circle.

\section{Multidiagonal matrices as the sum of their diagonals}

In the sequel (unless otherwise said) all matrices will be in
$\mathcal{M}_{n+1}$. Let now $A_{n+1,k,2k,\dots,sk}=(a_{ij})_{i,j=0}^n$  where  $s=[n/k]$. Denote its sub-, main, superdiagonal vectors extended to $n+1$ dimensional
vectors by adding the necessary number of zeros after their last coordinates by
 \Eq{vj}{
\v_{-s}&=(v_{-s,0},\dots,v_{-s,n-sk},0,\dots,0),\\
&\vdots \\
\v_{-1}&=(v_{-1,0},\dots,v_{-1,n-k},0,\dots,0),\\
\v_{0}&=(v_{0,0},\dots,v_{0,n}),\\
\v_{1}&=(v_{1,0},\dots,v_{1,n-k},0,\dots,0),\\
&\vdots \\
\v_{s}&=(v_{s,0},\dots,v_{s,n-sk},0,\dots,0).
}
This means that for $i,j=0,\dots,n$
\Eq{*}{
a_{ij}=\left\{\begin{array}{ll}
  v_{p,j}& \text{if}\; j-i=pk, p=-s,\dots,0,\dots,s,\\
  0 & \text{otherwise}.
  \end{array}
  \right.
}
For this matrix we also use the notations
\Eq{multi}{
A=A_{n+1,k,2k,\dots,sk}=A(\v_{-s},\dots,\v_0,\dots,\v_{s})
=A_{n+1,k,2k,\dots,sk}(\v_{-s},\dots,\v_0,\dots,\v_{s})
}
always trying to choose the most convenient one.  Here we have to remark that only the nonzero coordinates of the diagonal vectors  take part  in building our matrix.
Clearly all  matrices of $\mathcal{MD}_{n,k}$ can be written in the form \eq{multi}.

Introduce the elementary nilpotent matrix $N=(n_{ij})$ with
\Eq{*}{
n_{ij}=\left\{\begin{array}{rl}1&\hbox{ if } i-j=-1,\\
                          0&\hbox{ otherwise. }
\end{array}
\right.
}
$N$ contains one single unit superdiagonal right above the main diagonal, and its transpose $N^T$ contains one single unit subdiagonal immediately below the main diagonal.

It is easy to check that raising $N$ to power $k>0$ moves its single unit
superdiagonal to distance $k$ above the main diagonal.

The Moore-Penrose inverse $N^+$ of $N$ is its transpose, i.e $N^+=N^T$.
Let $N^0:=E$ (the unit matrix in $\mathcal{M}_{n+1}$) and  define the negative powers of $N$ by
\Eq{*}{
 N^{-k}:=(N^+)^k=(N^T)^k \quad(k\in \N).
 }
Then $N^{-k}$ has a single unit subdiagonal  at distance $k$  below the main diagonal. For $|k|\ge n+1$ the matrices $N^k$ become zero matrices.

Let
\Eq{*}{
D(\v):=\hbox{Diag}(v_0,v_1,\dots,v_n)
}
be the diagonal matrix with main diagonal $\v=(v_0,v_1,\dots,v_n)$.

The new form of our multidiagonal matrix is
\Eq{newform}{
&A(\v_{-s},\dots,\v_{0},\dots,\v_{s})=\\
&\quad N^{-sk}D(\v_{-s})+\dots+N^{-k}D(\v_{-1})+D(\v_{0})+D(\v_{1})N^{k}+\dots+D(\v_{s})N^{sk}.
}
 In this way we obtained our matrix as the sum of single diagonal matrices and it is
 easy to identify the matrix with the corresponding diagonal.
 We are grateful to Prof. Cs. Hegedűs for proposing us the use of the nilpotent matrix $N$ to describe multidiagonal matrices.

 Define the operator $\tau$ and its inverse  by
 \Eq{tau}{
 \tau \v:=(v_1,\dots,v_n,v_{n+1}),\quad \tau^{-1} \v:=(v_{-1},v_0,\dots,v_{n-1})
 }
 where for any vector $\v=(v_0,\dots,v_n)\in\C^{n+1}$
 \Eq{ext_v}{
 v_k=0  \hbox{ if }k>n \hbox{ or if }k<0.
 }
 This means that the effect of $\tau^k$ on any vector $\v$ is the increase of the subscripts of its coordinates by $k.$  Clearly  $\tau^j \v$ is zero vector for $|j|>n.$

 The $*$ product of two vectors $\v$ and ${\bf w}=(w_0,\dots,w_n)$ is defined
 coordinate-wise by
 \Eq{*}{
 \v*\w:=(v_0w_0,\dots,v_nw_n).
 }
 Clearly the operation $*$ is commutative, associative $D(\v)D(\w)=D(\v*\w)$ and
\Eq{prop-tau}{
\tau^i(\v*\w)&=(\tau^i\v)*(\tau^i\w), \,\,\tau^i(\tau^j\v)=\tau^{i+j}\v
}
for any $\v, \w\in\C^{n+1}$ and for integers  $i,j$.

For the multiplication of powers of $N$ we shall use the identities
\Eq{exp}{
N^iN^j&=N^{i+j}\,\,\hbox{ if }i,j\ge 0\,\,\hbox{or if }i,j\le 0, \\
}
and for nonnegative $i,j$
\Eq{powers}{
N^iN^{-j}&=D(\tau^i\1)N^{i-j}&\quad\hbox{if }i-j\ge 0,\\
N^{-j}N^i&=D(\tau^{-j}\1)N^{i-j}&\quad\hbox{if }i-j\ge 0,\\
N^iN^{-j}&=N^{-(j-i)}D(\tau^j\1)&\quad\hbox{if }i-j\le 0,\\
N^{-j}N^i&=N^{-(j-i)}D(\tau^{-i}\1)&\quad\hbox{if }i-j\le 0,\\
}
where $\1=(1,\dots,1)\in\C^{n+1}$ is the unit vector. Please note that $\tau^i\1*\v=\tau^i\v$ for integer $i.$

The order of factors in the products  $D(\v)N^{\pm j}$ can be changed by help of the identities
\Eq{comm}{
&D(\v)N^{-j}=N^{-j}D(\tau^j\v),\quad\,\,\, N^{j}D(\v)=D(\tau^j\v)N^{j},\\
&N^{-j}D(\v)=D(\tau^{-j}\v)N^{-j},\quad D(\v)N^{j}=N^{j}D(\tau^{-j}\v),
}
valid for any $\v\in \C^{n+1}$ and for nonnegative integer values of $j$.

\section{Structure of products inverses and powers of some multidiagonal matrices}

 \begin{Thm}(i) The set $\mathcal{MD}_{n,k}$ is closed under multiplication and taking powers with positive exponents.
\newline\noindent (ii) The subset of $\mathcal{MD}_{n,k}$ consisting of all nonsingular matrices is closed under taking inverses and powers with negative (and also nonnegative) exponents.
\end{Thm}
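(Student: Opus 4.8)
The plan is to read off from the diagonal representation \eq{newform} that $\mathcal{MD}_{n,k}$ is exactly the set of those $A\in\mathcal{M}_{n+1}$ whose nonzero entries lie on the diagonals at signed distances $0,\pm k,\pm 2k,\dots,\pm sk$ from the main diagonal, equivalently $a_{ij}=0$ whenever $k\nmid(i-j)$. In this description $\mathcal{MD}_{n,k}$ is visibly a linear subspace of $\mathcal{M}_{n+1}$ and it contains the identity $E=D(\1)$. The whole theorem then follows once (i) is established, since (ii) becomes an almost formal consequence of (i) together with these two structural facts.

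For (i) I would argue diagonal by diagonal. Writing two members of $\mathcal{MD}_{n,k}$ via \eq{newform}, their product expands into a sum of products of single-diagonal terms, each of the shape $N^{pk}D(\cdot)$ or $D(\cdot)N^{pk}$ with $|p|\le s$. Using the commutation identities \eq{comm} I would first normalize each two-factor product to the form $N^{pk}D(\cdot)\,N^{qk}D(\cdot)=N^{pk}N^{qk}D(\cdot)$, and then collapse the two powers of $N$. The decisive observation is that this collapse always produces the exponent $(p+q)k$: for equal signs this is \eq{exp} directly, and for opposite signs \eq{powers} reduces $N^{pk}N^{qk}$ to $N^{(p+q)k}$ up to a diagonal factor $D(\tau^{\pm m}\1)$. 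In every case the result is a single-diagonal matrix sitting at a distance that is a multiple of $k$; since a diagonal of an $(n+1)\times(n+1)$ matrix sits at distance at most $n$, that distance lies in $\{0,k,2k,\dots,sk\}$, and the product is again in $\mathcal{MD}_{n,k}$. Closure under positive powers then follows by induction and associativity.

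For (ii), observe that by (i) together with $E\in\mathcal{MD}_{n,k}$ the set $\mathcal{MD}_{n,k}$ is a unital subalgebra of $\mathcal{M}_{n+1}$; in particular every nonnegative power $A^{j}$ of a fixed $A\in\mathcal{MD}_{n,k}$ lies in $\mathcal{MD}_{n,k}$. If $A$ is nonsingular, the Cayley--Hamilton theorem applied to the characteristic polynomial $\chi(\lambda)=\lambda^{n+1}+c_n\lambda^{n}+\dots+c_1\lambda+c_0$ of $A$ gives $\chi(A)=0$ with constant term $c_0=(-1)^{n+1}\det A\ne0$; multiplying $\chi(A)=0$ by $A^{-1}$ and solving yields
\Eq{*}{
A^{-1}=-\frac{1}{c_0}\bigl(A^{n}+c_nA^{n-1}+\dots+c_1E\bigr).
}
The right-hand side is a linear combination of the powers $E,A,\dots,A^{n}$, all of which lie in the linear subspace $\mathcal{MD}_{n,k}$, so $A^{-1}\in\mathcal{MD}_{n,k}$. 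For a negative power $A^{-m}$ with $m\ge1$ I would then write $A^{-m}=(A^{-1})^{m}$ and apply the already-proved closure under positive powers to $A^{-1}\in\mathcal{MD}_{n,k}$, the nonnegative powers being covered by (i) and by $E\in\mathcal{MD}_{n,k}$. As an alternative route for (ii), reordering the indices by their residue modulo $k$ turns every matrix of $\mathcal{MD}_{n,k}$ into a block-diagonal one, which makes both closure statements transparent; but the algebraic argument above fits the operator calculus of \eq{newform} more directly.

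The main obstacle is entirely in (i): the bookkeeping needed to verify that \emph{every} product of two single-diagonal terms normalizes, via \eq{comm}, \eq{exp} and \eq{powers}, to a single power of $N$ whose exponent is the multiple $(p+q)k$ of $k$, while correctly tracking the accompanying diagonal vectors $\tau^{\pm m}\1*\v$. Once this finite case analysis is complete, part (ii) is purely formal.
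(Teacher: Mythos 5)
Your proposal is correct and follows essentially the same route as the paper: part (i) by expanding the product of two representations \eq{newform} and normalizing each two-factor summand via \eq{comm}, \eq{exp} and \eq{powers} to a single diagonal at a distance $(p+q)k$ (absorbing the extra factor $D(\tau^{\pm m}\1)$ into the diagonal vector), and part (ii) by Cayley--Hamilton, writing $A^{-1}$ as a linear combination of $E,A,\dots,A^{n}$. Your opening observation that $\mathcal{MD}_{n,k}$ is exactly the set of matrices with $a_{ij}=0$ whenever $k\nmid(i-j)$ would in fact give an even shorter entrywise proof of (i), but as written you carry out the same operator-calculus bookkeeping the paper does.
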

\begin{proof}
Let
\Eq{V&W}{
V&=\sum_{i=1}^{s}N^{-ik}D(\v_{-i})+\sum_{i=0}^{s}D(\v_{i})N^{ik}\\
W&=\sum_{j=1}^{s}N^{-jk}D(\w_{-j})+\sum_{j=0}^{s}D(\w_{j})N^{jk}
}
be two matrices in $\mathcal{MD}_{n,k}$ where the vectors $\v_i,(i=-s,\dots,0,\dots,s)$ are defined by \eq{vj}, and
 \Eq{*}{
\w_{j}=(w_{j,0},\dots,w_{j,n-jk},0,\dots,0)\quad (j=-s,\dots,0,\dots,s).
}
The product $VW$ is decomposed into four sums
\Eq{VW}{
VW&=\sum_{i=1}^s\sum_{j=1}^sN^{-ik}D(\v_{-i})N^{-jk}D(\w_{-j})
  +\sum_{i=1}^s\sum_{j=0}^sN^{-ik}D(\v_{-i})D(\w_{j})N^{jk}\\
  &+\sum_{i=0}^s\sum_{j=1}^sD(\v_{i})N^{ik}N^{-jk}D(\w_{-j})
  +\sum_{i=0}^s\sum_{j=0}^sD(\v_{i})N^{ik}D(\w_{j})N^{jk}.
}
We transform the summands  by help of the relations \eq{powers} and \eq{comm} as follows:
\Eq{*}{
\begin{array}{rl}
&N^{-ik}D(\v_{-i})N^{-jk}D(\w_{-j})=N^{-ik}N^{-jk}D(\tau^{ik}\v_{-i})D(\w_{-j})\\
&\quad =N^{-(i+j)k}D(\tau^{ik}\v_{-i}*\w_{-j}),\\
&N^{-ik}D(\v_{-i})D(\w_{j})N^{jk}=N^{-ik}D(\v_{-i}*\w_{j})N^{jk}
=D(\tau^{-ik}(\v_{-i}*\w_{j}))N^{-ik}N^{jk},\\
&D(\v_{i})N^{ik}N^{-jk}D(\w_{-j})=N^{ik}D(\tau^{-ik}\v_{i})N^{-jk}D(\w_{-j})\\
&\quad =N^{ik}N^{-jk}D(\tau^{(j-i)k}\v_{i}*\w_{-j}),\\
&D(\v_{i})N^{ik}D(\w_{j})N^{jk}=D(\v_{i}*\tau^{ik}\w_{j})N^{(i+j)k}.
\end{array}
}

The expressions in the second and third line require further transformations using again \eq{powers},\eq{comm} and the properties of the $*$. The expression in the second line is transformed as follows.

If $i-j\le 0$ then we get
\Eq{*}{
&D(\tau^{-ik}(\v_{-i}*\w_{j})N^{-ik}N^{jk}
=D(\tau^{-ik}(\v_{-i}*\w_{j}))D(\tau^{-ik}\1)N^{(j-i)k}\\
&\quad =D(\tau^{-ik}(\v_{-i}*\w_{j}))N^{(j-i)k},
}
since
\Eq{*}{
\tau^{-ik}(\v_{-i}*\w_{j})*\tau^{-ik}\1=\tau^{-ik}(\v_{-i}*\w_{j}*\1)
=\tau^{-ik}(\v_{-i}*\w_{j}).
}
If $i-j>0$ then  we obtain
\Eq{*}{
&D(\tau^{-ik}(\v_{-i}*\w_{j}))N^{-ik}N^{jk}
=D(\tau^{-ik}(\v_{-i}*\w_{j}))D(\tau^{-jk}\1)N^{-(i-j)k}\\
&\quad=D\(\tau^{-ik}(\v_{-i}*\w_{j})*\tau^{-jk}\1\)N^{-(i-j)k}\\
&\qquad=N^{-(i-j)k}D\(\tau^{(i-j)k}(\tau^{-ik}(\v_{-i}*\w_{j})*\tau^{-jk}\1)\)
=N^{-(i-j)k}D(\tau^{-jk}(\v_{-i}*\w_{j})),
}
since
\Eq{*}{
&\tau^{(i-j)k}\(\tau^{-ik}(\v_{-i}*\w_{j})*\tau^{-jk}\1\)
=\tau^{(i-j)k}(\tau^{-ik}\v_{-i}*\tau^{-ik}\w_{j}*\tau^{-jk}\1)\\
&\quad=\tau^{(i-j)k}\(\tau^{-ik}\v_{-i}*\tau^{-jk}(\tau^{(j-i)k}\w_{j}*\1)\)
=\tau^{(i-j)k}(\tau^{-ik}\v_{-i}*\tau^{-ik}\w_{j})\\
&\qquad=\tau^{-jk}(\v_{-i}*\w_{j}).
}
We transform the expression in the third line similarly.

If $i-j\le 0$ then we get
\Eq{*}{
&N^{ik}N^{-jk}D(\tau^{(j-i)k}\v_{i}*\w_{-j})
=N^{-(j-i)k}D(\tau^{jk}\1)D(\tau^{(j-i)k}\v_{i}*\w_{-j})\\
&\quad=N^{-(j-i)k}D(\tau^{jk}\1*\tau^{(j-i)k}\v_{i}*\w_{-j})
=N^{-(j-i)k}D(\tau^{jk}(\1*\tau^{-ik}\v_{i})*\w_{-j})\\
&\qquad=N^{-(j-i)k}D(\tau^{(j-i)k}\v_{i}*\w_{-j}).
}
For $i-j>0$ we have
\Eq{*}{
&N^{ik}N^{-jk}D(\tau^{(j-i)k}\v_{i}*\w_{-j})
=D(\tau^{ik}\1)N^{(i-j)k}D(\tau^{(j-i)k}\v_{i}*\w_{-j})\\
&\quad=D(\tau^{ik}\1)D\(\tau^{(i-j)k}(\tau^{(j-i)k}\v_{i}*\w_{-j})\)N^{(i-j)k}
=D(\v_{i}*\tau^{(i-j)k}\w_{-j})N^{(i-j)k}
}
since
\Eq{*}{
&\tau^{ik}\1*\tau^{(i-j)k}(\tau^{(j-i)k}\v_{i}*\w_{-j})
=\tau^{ik}\1*\v_{i}*\tau^{(i-j)k}\w_{-j}\\
&\quad =\tau^{ik}(\1*\tau^{-ik}\v_{i})*\tau^{(i-j)k}\w_{-j}=\v_{i}*\tau^{(i-j)k}\w_{-j}.
}
Using these new forms of the summands and splitting the second and third sums into two we can rewrite \eq{VW} as
\Eq{VWnew}{
VW&\!=\!\sum_{i=1}^s\sum_{j=1}^sN^{-(i+j)k}D(\tau^{ik}\v_{-i}*\w_{-j})
  +\sum_{i=1}^s\sum_{j=0,i\le j}^sD(\tau^{-ik}(\v_{-i}*\w_{j}))N^{(j-i)k}\\
  &+\sum_{i=1}^s\!\sum_{j=0,i\!>\!j}^sN^{-\!(i\!-\!j)k}\!D(\tau^{-\!jk}(\v_{-\!i}\!*\!\w_{j}))
  \!+\!\sum_{i=0}^s\!\sum_{j=1,i\!\le\! j}^sN^{-\!(j\!-\!i)k}\!D(\tau^{(j\!-\!i)k}\v_{i}\!*\!\w_{-\!j})\\
  &+\sum_{i=0}^s\sum_{j=1,i>j}^sD(\v_{i}*\tau^{(i-j)k}\w_{-j})N^{(i-j)k}
  +\sum_{i=0}^s\sum_{j=0}^sD(\v_{i}*\tau^{ik}\w_{j})N^{(i+j)k}.
}

Using the rules $D(\v)N^p+D(\w)N^p=D(\v+\w)N^p,$ $N^{-p}D(\v)+N^{-p}D(\w)=N^{-p}D(\v+\w)$
for $p\ge 0, \v,\w\in \C^{n+1}$ we add those terms of \eq{VWnew} for which the exponents of $N$ are the same nonnegative or negative numbers and omit those terms where the absolute value of the  exponents of $N$ is greater than $n$.

The result is
\Eq{VWz}{
VW=\sum_{p=1}^{s}N^{-pk}D(\z_{-p})+D(\z_{0})+\sum_{p=1}^{s}D(\z_{p})N^{pk}
}
with suitable vectors $\z_p\,(p=-s,\dots,0,\dots,s)$ proving that the set $\mathcal{MD}_{n,k}$ is closed under taking products. This clearly implies that it is also closed under taking powers with positive exponents,  completing the proof of (i).

To prove (ii) take a nonsingular matrix $V\in \mathcal{MD}_{n,k}$ and let
\Eq{*}{
\hbox{Det}(V-\lambda E)=\sum\limits_{j=0}^{n+1} \nu_j\lambda^j
}
be the characteristic polynomial of $V$, where $\nu_j\in \mathbb{C},$ in particular
$\nu_{n+1}=(-1)^{n+1}$ and $\nu_0=\hbox{Det}(V)\ne 0$. By the Cayley-Hamilton theorem we have
$\sum\limits_{j=0}^{n+1} \nu_jV^j=O$ (where $O$ is the zero matrix) therefore
\Eq{*}{
E=V\(-\sum\limits_{j=1}^{n+1} \frac{\nu_j}{\nu_0}V^{j-1}\)
=\(-\sum\limits_{j=1}^{n+1} \frac{\nu_j}{\nu_0}V^{j-1}\)V
}
showing that
\Eq{*}{
V^{-1}=-\sum\limits_{j=1}^{n+1} \frac{\nu_j}{\nu_0}V^{j-1}\in \mathcal{MD}_{n,k}
}
and completing the proof.
\end{proof}

\section{Explicite form of the inverse of the $k$-tridiagonal matrix $A_{n+1,k}$  if $n+1\le 2k$}

\begin{Thm} (j) If $n+1\le 2k$ then the $k$-tridiagonal matrix
\Eq{inverse}{
A=N^{-k}D(\a)+D(\b)+D(\c)N^{k}
}
where
\Eq{*}{
\a=(a_{0},\dots,a_{n-k},0,\dots,0),\b=(b_{0},\dots,b_{n}),\c=(c_{0},\dots,c_{n-k},0,\dots,0)
}
is nonsingular if and only if 
\Eq{nonsing}{
b_j&\ne 0\,(j=n+1-k,\dots,k-1)\\
  b_jb_{j+k}-a_{j}c_{j}&\ne 0,\,\,(j=0,\dots,n-k)).
}
(jj) If \eq{nonsing} holds then  $A^{-1}$ is also  $k$-tridiagonal and is of the form
\Eq{*}{
A^{-1}&\!=\!N^{-k}D(\x)+D(\y)+D(\z)N^{k}
}
where
\Eq{*}{
\x&\!=\!\(\frac{-a_{0}}{b_{0}b_{k}\!-\!a_{0}c_{0}},\dots,
\frac{-a_{n\!-\!k}}{b_{n\!-\!k}b_{n}\!-\!a_{n-k}c_{n\!-\!k}},0,\dots,0\),\\\\
 \y&\!=\!\(\frac{b_{k}}{b_{0}b_{k}\!-\!a_{0}c_{0}},\dots,
\frac{b_{n}}{b_{n\!-\!k}b_{n}\!-\!a_{n\!-\!k}c_{n\!-\!k}},
\frac{1}{b_{n\!+\!1\!-\!k}},\dots,\frac{1}{b_{k\!-\!1}},
\frac{b_{0}}{b_{k}b_{0}\!-\!a_{0}c_{0}},\dots,
\frac{b_{n\!-\!k}}{b_{n}b_{n\!-\!k}\!-\!a_{n\!-\!k}c_{n\!-\!k}}\),\\\\
 \z&\!=\!\(\frac{-c_{0}}{b_{0}b_{k}\!-\!a_{0}c_{0}},\dots,
\frac{-c_{n\!-\!k}}{b_{n\!-\!k}b_{n}\!-\!a_{n\!-\!k}c_{n\!-\!k}},0,\dots,0\).
}
\end{Thm}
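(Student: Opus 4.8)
The plan is to exploit the structural consequence of the hypothesis $n+1\le 2k$. Here $s=[n/k]=1$, so $A$ is genuinely $k$-tridiagonal, and moreover $2k\ge n+1$ forces $N^{\pm 2k}=O$. Conceptually, consider the graph on the index set $\{0,\dots,n\}$ in which $i$ and $i'$ are joined when $|i-i'|=k$. No index $i$ can be joined to both $i-k$ and $i+k$, since that would require $k\le i\le n-k$ and hence $2k\le n$, contradicting $n+1\le 2k$. Thus the connected components are exactly the pairs $\{j,j+k\}$ for $j=0,\dots,n-k$ together with the singletons $\{j\}$ for $j=n+1-k,\dots,k-1$.

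For part (j) I would apply the permutation that reorders the basis so that each pair $\{j,j+k\}$ and each singleton is grouped together. Under this permutation $A$ becomes block diagonal, with one $2\times2$ block $\left(\begin{smallmatrix} b_j & c_j\\ a_j & b_{j+k}\end{smallmatrix}\right)$ for each pair (of determinant $b_jb_{j+k}-a_jc_j$) and one $1\times1$ block $(b_j)$ for each singleton. A block-diagonal matrix is nonsingular precisely when every diagonal block is, and this is exactly the pair of conditions in \eq{nonsing}.

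For part (jj) the inverse of a block-diagonal matrix is obtained by inverting each block. Inverting the $2\times2$ blocks by the adjugate formula and the $1\times1$ blocks trivially, then reading the resulting entries back into the decomposition $N^{-k}D(\x)+D(\y)+D(\z)N^{k}$, yields the stated $\x,\y,\z$: the off-diagonal entries come from the $-a_j,-c_j$ corners, and the main diagonal splits into the three announced pieces (left members $b_{j+k}/(b_jb_{j+k}-a_jc_j)$, singletons $1/b_j$, right members $b_j/(b_jb_{j+k}-a_jc_j)$). Alternatively, and closer to the algebra of Section 2, one can verify directly that $A\bigl(N^{-k}D(\x)+D(\y)+D(\z)N^{k}\bigr)=E$: the product expands into nine terms, the two extreme ones carry the factor $N^{\pm2k}=O$, and the commutation rules \eq{comm}, \eq{powers} collapse the rest to $N^{-k}D(\a*\y+\tau^{k}\b*\x)+D(\cdots)+D(\b*\z+\c*\tau^{k}\y)N^{k}$; the sub- and superdiagonal vectors vanish coordinatewise by the choice of $\x,\z$, and the diagonal reduces to $\1$.

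The main obstacle is the index bookkeeping rather than any deep idea. One must check that the cross contributions $a_{i-k}z_{i-k}$ and $c_ix_i$ to the $i$-th diagonal entry vanish outside the pair ranges --- which they do precisely because $n+1\le 2k$ forces $i-k<0$ or $i>n-k$ for every singleton index --- and one must correctly match the three pieces of $\y$ to the left-member, singleton, and right-member formulas, taking care with the zero padding of $\a,\c,\x,\z$ beyond index $n-k$.
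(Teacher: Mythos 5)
Your proof is correct, and it takes a genuinely different route from the paper. The key observation you exploit --- that under $n+1\le 2k$ no index is adjacent to both $i-k$ and $i+k$, so a simultaneous row--column permutation turns $A$ into a direct sum of $2\times 2$ blocks $\left(\begin{smallmatrix} b_j & c_j\\ a_j & b_{j+k}\end{smallmatrix}\right)$ and $1\times 1$ blocks $(b_j)$ --- does not appear in the paper. The paper instead proves (j) by invoking the known determinant recursion $\hbox{Det}\,A=\prod_j f_j$ for $k$-tridiagonal matrices (citing El-Mikkawy--Atlan) and simplifying it using $n-k\le k-1$, and proves (jj) by first appealing to Theorem 1 to conclude that $A^{-1}$ is $k$-tridiagonal, then writing $A^{-1}=N^{-k}D(\x)+D(\y)+D(\z)N^{k}$ with unknown coordinates, expanding $AX=E$ via the calculus of Section 2, and solving the resulting $\(3(n+1)-2k\)\times\(3(n+1)-2k\)$ linear system by explicit row operations; that elimination requires a temporary assumption $b_{k+j}\ne 0$ which is then removed by a limiting argument. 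Your block-diagonalization buys several things at once: the determinant criterion and the explicit inverse both drop out of the $2\times 2$ adjugate formula, no external determinant formula or appeal to Theorem 1 is needed, and the genericity-plus-limit step disappears entirely. What it costs is generality: the component argument is special to $s=1$ and $n+1\le 2k$, whereas the paper's machinery (the $N$, $D$, $\tau$ calculus) is the engine for the rest of the article. Your fallback sketch --- verifying $AX=E$ directly with $N^{\pm 2k}=O$ and checking that $\a*\y+\tau^{k}\b*\x$ and $\b*\z+\c*\tau^{k}\y$ vanish coordinatewise while the diagonal term equals $\1$ --- is essentially the paper's computation run in reverse (verification rather than derivation), and is also a complete argument once combined with your proof of (j).
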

\begin{proof}
 The determinant of $A$ is by the known formula (see e.g. \cite{EA})
\Eq{Adet}{
\hbox{Det}\,A=\prod\limits_{j=0}^n f_j
}
where
\Eq{*}{ 
f_{j}=\left\{\begin{array}{rll}
&b_j &\hbox{ if } j=0,\dots,k-1,\\
&b_j-a_{j-k}c_{j-k}/f_{j-k}&\hbox{ if } j=k,\dots,n.
\end{array}\right.
}
To define $f_j$ for $j=k,\dots,n$ we have to assume  $f_j\ne 0$ for $j=0,\dots,n-k.$
However formula \eq{Adet} is valid without this assumption as after simplifications
the fractions disappear (see \cite{dFL}).
In our case   $n-k\le k-1$ and the product in \eq{Adet} can be simplified to
\Eq{*}{  
\hbox{Det}\,A&=\(\prod\limits_{j=0}^{k-1} b_j\)\(\prod\limits_{j=k}^{n}
\(b_j\!-\!a_{j\!-\!k}c_{j\!-\!k}/f_{j-k}\)\)
\!=\!\(\prod\limits_{j=0}^{k-1} b_j\)\(\prod\limits_{j=0}^{n-k}
\(b_{j+k}\!-\!a_{j}b_{j}/b_{j}\)\)\\
&=\!\(\prod\limits_{j=n+1-k}^{k-1}\!\! b_j\)\(\prod\limits_{j=0}^{n-k}
\(b_jb_{j\!+\!k}\!-\!a_{j}c_{j}\)\).
}
This shows that $A$ is nonsingular if and only if \eq{nonsing} holds, proving (j).
\medskip

If $n+1\le 2k$ and \eq{nonsing} holds then we have seen that $A^{-1}$ is also $k$-tridiagonal thus we may write it as
\Eq{*}{ 
A^{-1}=X=N^{-k}D(\x)+D(\y)+D(\z)N^{k}
}
where 
 \Eq{*}{
\x=(x_{0},\dots,x_{n-k},0,\dots,0),\,\y=(y_{0},\dots,y_{n}),\,
\z=(z_{0},\dots,z_{n-k},0,\dots,0).
}
Expanding the product $AX$ we get
\Eq{expand}{
AX&=N^{-k}D(\a)N^{-k}D(\x)+N^{-k}D(\a)D(\y)+N^{-k}D(\a)D(\z)N^{k}\\
&+D(\b)N^{-k}D(\x)+D(\b)D(\y)+D(\b)D(\z)N^{k}\\
&+D(\c)N^{k}N^{-k}D(\x)+D(\c)N^{k}D(\y)+D(\c)N^{k}D(\z)N^{k}.
}
Using  suitable relations of \eq{comm} we rewrite the first term of \eq{expand} as
\Eq{*}{
N^{-k}D(\a)N^{-k}D(\x)=N^{-k}N^{-k}D(\tau^k\a)D(\x)=N^{-2k}D(\tau^k\a*\x),
}
the  second term as $N^{-k}D(\a*\y).$

The third term can be written as
\Eq{*}{
N^{-k}D(\a)D(\z)N^{k}&=D(\tau^{-k}\a)N^{-k}D(\z)N^{k}
=D(\tau^{-k}\a)D(\tau^{-k}\z)N^{-k}N^{k}\\
&=D(\tau^{-k}(\a*\z)D(\tau^{-k}\1)N^{0}=D(\tau^{-k}(\a*\z)).
}
Rewriting the other terms in a similar way we finally get that
\Eq{AX}{
AX&=N^{-2k}D(\tau^k\a*\x)+N^{-k}D(\a*\y+\tau^k\b*\x)\\
&+D(\tau^{-k}(\a*\z)+\b*\y+\c*\x)\\
&+D(\b*\z+\c*\tau^k\y)N^{k}+D(\c*\tau^k\z)N^{2k}.
}
In our case $N^{\pm 2k}$= zero matrix, hence the equations of the linear inhomogeneous system $AX=E$  can be written as
\Eq{system}{
&\a*\y+\tau^k\b*\x={\bf 0}, \,\b*\z+\c*\tau^k\y={\bf 0},\\
&\tau^{-k}(\a*\z)+\b*\y+\c*\x=\1
}
where ${\bf 0}$ is the $n+1$ dimensional zero vector. The unknowns are the nonzero coordinates of $\x,\y,\z$ numbering to $n+1+2(n+1-k)=3(n+1)-2k.$  System \eq{system} is in detailed form
\Eq{*}{ 
&{\bf 0}=\a*\y+\tau^k\b*\x
=(a_0,\dots,a_{n-k},0,\dots,0)*(y_{0},\dots,y_{n})\\
&+(b_{k},\dots,b_{n},0,\dots,0)*(x_{0},\dots,x_{n-k},0,\dots,0)\\
&=(\underbrace{b_{k}x_{0}+a_{0}y_{0},\dots,b_{n}x_{n-k}+a_{n-k}y_{n-k}}_{n+1-k},
\underbrace{0,\dots,0}_k))\\\\
&{\bf 0}=\b*\z+\c*\tau^k\y
=(b_{0},\dots,b_{n})*(z_{0},\dots,z_{n-k},0,\dots,0\\
&+(c_{0},\dots,c_{n-k},0,\dots,0)*(y_{k},\dots,y_{n},0,\dots,0)\\
&=(\underbrace{c_{0}y_{k}+b_{0}z_{0},\dots,c_{n-k}y_{n}+b_{n-k}z_{n-k}}_{n+1-k},
\underbrace{0,\dots,0}_k)\\\\
&\1=\tau^{-k}(\a*\z)+\b*\y+\c*\x=(0,\dots,0,a_{0}z_{0},\dots,a_{n-k}z_{n-k})\\
&+(b_{0},\dots,b_{n})*(y_{0},\dots,y_{n})
+(c_{0},\dots,c_{n-k},0,\dots,0)*(x_{0},\dots,x_{n-k},0,\dots,0)\\
&=(\underbrace{c_{0}x_{0}+b_{0}y_{0},\dots,c_{n-k}x_{n-k}+b_{n-k}y_{n-k}}_{n+1-k},
\underbrace{0,\dots,0}_k)\\
&+(\underbrace{0,\dots,0}_{n+1-k},\underbrace{b_{n+1-k}y_{n+1-k},\dots,b_{k-1}y_{k-1}}_{2k-(n+1)},
\underbrace{0,\dots,0}_{n+1-k})\\
&+(\underbrace{0,\dots,0}_k,
\underbrace{b_{k}y_{k}+a_{0}z_{0},\dots,b_{n}y_{n}+a_{n-k}z_{n-k}}_{n+1-k}).
}
In the first and second group the last $k$ equations are trivial ($0=0$) thus these
are  omitted. The remaining number of our (non trivial) equations is $2(n+1-k)+n+1=3(n+1)-2k$, the same as the number of unknowns.

Next we solve this system. The unknowns $y_{n+1-k},\dots,y_{k-1}$ obtained easily as
\Eq{*}{
y_j=\frac{1}{b_j}\,\,(j=n+1-k,\dots,k-1).
}
Collect the remaining unknowns into one column vector and the corresponding free
terms also into one vector
\Eq{*}{
\x^*&=(x_{0},\dots,x_{n-k},y_{0},\dots,y_{n-k},y_{k},\dots,y_{n},z_{0},\dots,z_{n-k})^T\\
\b^*&=(\underbrace{0,\dots,0}_{2(n+1-k)},\underbrace{1,\dots,1}_{2(n+1-k)})^T.
}
Denoting by $U$ the matrix of the reduced  system it can be written as $U\x^*=\b^*.$

This reduced system has $4(n+1-k)$ equations and unknowns. In detailed form
\Eq{*}{ 
\renewcommand\arraystretch{0.2}\arraycolsep=1.0pt
 \left(\!
  \begin{array}{ccc|ccc|ccc|ccc}
   b_{k}x_{0}  &  && a_{0}y_{0}   &  &  &  & &   &  &  &  \\
    &\ddots   &  && \ddots   &  &  & & &  &  &    \\
     & &b_{n}x_{n\!-\!k}   &&  & a_{n\!-\!k}y_{n\!-\!k}    &  &  &  &  &  &  \\
    \hline
     &  &  &  &  &    & c_{0}y_{k} &  &  & b_{0}z_{0} &  &  \\
     &  &  &  &  &    &  & \ddots &  &  & \ddots &  \\
     &  &  &  &  &    &  &  & c_{n\!-\!k}y_{n} &  &  & b_{n\!-\!k}z_{n\!-\!k} \\
    \hline
    c_{0}x_{0} &  &  & b_{0}y_{0} &  &  & & &  &  &  &    \\
     & \ddots  &  &  &\ddots  &  &  & & &  &  &    \\
     &  & c_{n\!-\!k}x_{n\!-\!k} &  &  &b_{n\!-\!k}y_{n\!-\!k}  &  & & &  &  &    \\
    \hline
       &  &  &  & & &b_{k}y_{k}  &  &  & a_{0}z_{0} &  &  \\
       &  &  &  & & &  &\ddots  &  &  & \ddots &  \\
       &  &  &  & & &  &  &b_{n}y_{n}  &  &  &a_{n\!-\!k}z_{n\!-\!k}  \\
  \end{array}
\!\right)
\!=\!
\renewcommand\arraystretch{0.52}\arraycolsep=1.60pt
\left(\!
   \begin{array}{c}
     0 \\
     \vdots \\
     0 \\
     \hline
     0 \\
     \vdots \\
     0 \\
     \hline
     1 \\
     \vdots \\
     1 \\
     \hline
     1 \\
      \vdots \\
     1 \\
   \end{array}
 \!\right)
}
which shows that our system consists of four groups of equations, each of them with
$n+1-k$ equations of similar structures.
Number the equations starting by zero. Multiply the  $j$th equations of the first
system by $-c_{j}$ and add these to the $j$th equations of the third system
multiplied by $b_{k+j}$  for $j=0,\dots,n-k$.
Our system goes over into
\Eq{*}{  
\renewcommand\arraystretch{0.2}\arraycolsep=1.0pt
 \left(\!
  \begin{array}{ccc|ccc|ccc|ccc}
   b_{k}x_{0}  &  && a_{0}y_{0}   &  &  &  & &   &  &  &  \\
    &\ddots   &  && \ddots   &  &  & & &  &  &    \\
     & &b_{n}x_{n\!-\!k}   &&  & a_{n\!-\!k}y_{n\!-\!k}    &  &  &  &  &  &  \\
    \hline
     &  &  &  &  &    & c_{0}y_{k} &  &  & b_{0}z_{0} &  &  \\
     &  &  &  &  &    &  & \ddots &  &  & \ddots &  \\
     &  &  &  &  &    &  &  & c_{n\!-\!k}y_{n} &  &  & b_{n\!-\!k}z_{n\!-\!k} \\
    \hline
     &  &  & (b_{0}b_{k}\!-\!a_{0}c_{0})y_{0} &  &  & & &  &  &  &    \\
     &   &  &  &\ddots  &  &  & & &  &  &    \\
     &  &  &  &  &(b_{n\!-\!k}b_{n}\!-\!a_{n\!-\!k}c_{n\!-\!k})y_{n\!-\!k}  &  & & &
     &  &    \\
    \hline
       &  &  &  & & &b_{k}y_{k}  &  &  & a_{0}z_{0} &  &  \\
       &  &  &  & & &  &\ddots  &  &  & \ddots &  \\
       &  &  &  & & &  &  &b_{n}y_{n}  &  &  &a_{n\!-\!k}z_{n\!-\!k}  \\
  \end{array}
\!\right)
\!=\!
\renewcommand\arraystretch{0.52}\arraycolsep=1.60pt
\left(\!
   \begin{array}{c}
     0 \\
     \vdots \\
     0 \\
     \hline
     0 \\
     \vdots \\
     0 \\
     \hline
     b_{k} \\
     \vdots \\
      b_{n} \\
     \hline
     1 \\
      \vdots \\
     1 \\
   \end{array}
 \!\right)
}

From the third group of  equations  we get immediately that
\Eq{*}{  
y_j=\frac{b_{k+j}}{b_{j}b_{k+j}-a_{j}c_{j}}\,\,(j=0,\dots,n-k).
}
To continue our calculations we temporally assume that $b_{k+j}\ne 0,\,\,(j=0,\dots,
n-k).$ Then from the first group of equations we obtain that
\Eq{x}{
x_j=\frac{-a_{j}b_{k+j}}{b_{k+j}}=
\frac{-a_{j}}{b_{j}b_{k+j}-a_{j}c_{j}}\,\,(j=0,\dots,n-k).
}
Multiply the $j$th equations of the second group by $-a_j$ and add them to the $j$th
equations of the fourth group multiplied by  $b_j$ for $j=0,\dots,n-k$. Then the
fourth group of equations go over into
\Eq{*}{
(b_{k+j}b_{j}-a_{j}c_{j})y_{k+j}=b_{j},
}
hence
\Eq{*}{  
y_{k+j}=\frac{b_{j}}{b_{k+j}b_{j}-a_{j}c_{j}},\,\,(j=0,\dots,n-k).
}
Finally multiply the $j$th equations of the second group by $-b_{j+k}$ and add them
to the $j$th equations of the fourth group multiplied by  $c_j$ for $j=0,\dots,n-k$.
Then the fourth group of equations become
\Eq{*}{
(-b_{k+j}b_{j}+a_{j}c_{j})z_{j}=c_{j},
}
thus
\Eq{*}{  
z_{j}=\frac{-c_{j}}{b_{k+j}b_{j}-a_{j}c_{j}},\,\,(j=0,\dots,n-k).
}
Now we justify  \eq{x} without our temporally assumption. Namely if  $b_{k+j}=0$ for
some $j=0,\dots, n-k$ then change it a little to $b'_{k+j}\ne 0$ such  that the
factor $b_jb'_{j+k}-a_{j}c_{j}\ne 0$. Then we obtain
\Eq{*}{
x'_j=\frac{-a_{j}}{b_{j}b'_{k+j}-a_{j}c_{j}}
}
taking the limit $b'_{k+j}\to 0=b_{k+j}$ justifies  the validity of the final formula
for $x_j$. 
\end{proof}

\end{document}